\newcounter{num}[section] %
\newenvironment{theo}
{\refstepcounter{num}%
\bigskip\noindent{\bf Theorem~\arabic{section}.\arabic{num}. }\it}
{\smallskip}
\newenvironment{lemma}
{\refstepcounter{num}%
\bigskip\noindent{\bf Lemma~\arabic{section}.\arabic{num}. }\it}
\newenvironment{eq}{\begin{equation}}{\end{equation}}
\newcommand{\si}{\sigma}
\newcommand{\al}{\alpha}
\newcommand{\be}{\beta}
\newcommand{\ga}{\gamma}
\newcommand{\de}{\delta}
\newcommand{\un}[1]{{\underline{#1}} }
\newcommand{\tr}{\mathop{\rm tr}}
\newcommand{\mdeg}{\mathop{\rm mdeg}}
\newcommand{\Char}{\mathop{\rm char}}
\newcommand{\FF}{{\mathbb{F}}}   
\newcommand{\NN}{{\mathbb{N}}}
\newcommand{\I}{\mathbb{I}} 
\newcommand{\symmmatr}[6]{\left(
\begin{array}{ccc}
#1 & #2 & #3\\ 
#2 & #4 & #5\\ 
#3 & #5 & #6 \\
\end{array}
\right)}
\newcommand{\mylabel}[1]{}
\newcommand{\mycomment}[1]{}
\begin{document}
\title[Minimal generating and separating sets for orthogonal invariants]{Minimal generating and separating sets for $O(3)$-invariants of several matrices}

\thanks{This research was supported by FAPESP 2016/00541-0 and FAEPEX 2449/18}

\author{Ronaldo Jos\'e Sousa Ferreira}
\address{Ronaldo Jos\'e Sousa Ferreira\\ 
Federal University of Maranh\~ao, Rua Santa Clara, 2010,
65940-000 Graja\'u, MA, Brazil}
\email{ronaldoj.sf@hotmail.com (Ronaldo Jos\'e Sousa Ferreira)}

\author{Artem Lopatin}
\address{Artem Lopatin\\ 
Universidade Estadual de Campinas (UNICAMP), 651 Sergio Buarque de Holanda, 13083-859 Campinas, SP, Brazil}
\email{dr.artem.lopatin@gmail.com (Artem Lopatin)}

\begin{abstract} 
Given an algebra  $\FF[H]^G$ of polynomial invariants of an action of the group $G$ over the vector space $H$, 
a subset $S$ of $\FF[H]^G$ is called separating if $S$ separates all orbits that can be separated by $\FF[H]^G$. 
A minimal separating set is found for some algebras of matrix invariants of several matrices over an infinite field of arbitrary characteristic different from two in case of the orthogonal group. Namely, we consider the following cases:
\begin{enumerate}
\item[$\bullet$] $GL(3)$-invariants of two matrices;

\item[$\bullet$]  $O(3)$-invariants of $d>0$ skew-symmetric matrices; 

\item[$\bullet$]  $O(4)$-invariants of two skew-symmetric matrices;

\item[$\bullet$]  $O(3)$-invariants of two symmetric matrices.
\end{enumerate}
A minimal generating set is also  given for the algebra of orthogonal invariants of three $3\times 3$ symmetric matrices.

\noindent{\bf Keywords: } invariant theory, matrix invariants, classical linear groups,  separating invariants, generators, positive characteristic.

\noindent{\bf 2010 MSC: } 16R30; 15B10; 13A50.
\end{abstract}

\maketitle

\section{Introduction}\label{section_intro}

\subsection{Definitions} All vector spaces, algebras, and modules are over an infinite field $\FF$ of an arbitrary characteristic $p=\Char{\FF}\geq0$, unless otherwise stated.  By an algebra we always mean an associative algebra.

Given $n>1$ and $d\geq 1$,  we consider  the polynomial algebras 
$$\begin{array}{lcll}
R & = &\FF[x_{ij}(k) \;| \;1\leq i,j\leq n,& 1\leq k\leq d]; \\
R_{+} & = &\FF[x_{ij}(k)\;|\;1\leq j\leq i\leq n,& 1\leq k\leq d]; \\
R_{-} & = &\FF[x_{ij}(k)\;|\;1\leq j<i\leq n,& 1\leq k\leq d]. \\
\end{array}
$$
together with $n\times n$ {\it generic} matrices $X_k$,  {\it symmetric  generic} matrices $Y_k$ and {\it skew-symmetric generic} matrices  $Z_k$:
$$(X_k)_{ij}=x_{ij}(k),\qquad
(Y_k)_{ij}=
\left\{ 
\begin{array}{rc}
x_{ij}(k),& \text{ if } i\geq j\\
x_{ji}(k),& \text{ if } i< j\\
\end{array}
\right.,\qquad
(Z_k)_{ij}=
\left\{ 
\begin{array}{rc}
x_{ij}(k),& \text{ if } i> j\\
0 ,& \text{ if } i= j\\
-x_{ji}(k),& \text{ if } i< j\\
\end{array}
\right..
$$
Here $(A)_{ij}$ stands for the $(i,j)^{\rm th}$ entry of a matrix $A$.
The $t^{\rm th}$ coefficient of the characteristic polynomial of an $n\times n$ matrix $A$ is denoted by $\si_t(A)$. As an example, $\tr(A)=\si_1(A)$ and $\det(A)=\si_n(A)$.  Denote by $M(n)$ the space of all $n\times n$ matrices over $\FF$, $S_{+}(n)=\{A\in M(n)\,|\,A^T=A\}$, $S_{-}(n)=\{A\in M(n)\,|\,A^T=-A\}$ and $O(n)=\{A\in M(n)\,|\,AA^T=I_n\}$. Consider the algebras of {\it matrix invariants} $R^{GL(n)}$, $R^{O(n)}$, $R_{+}^{O(n)}$, $R_{-}^{O(n)}$, respectively, that are generated by  $\si_t(b)$, where $1\leq t\leq n$ and $b$ ranges over all monomials in \begin{enumerate}
\item[$\bullet$]  $X_1,\ldots,X_d$ (see~\cite{Sibirskii_1968}, \cite{Procesi76}, \cite{Donkin92a}),

\item[$\bullet$]  $X_1,\ldots,X_d,X_1^T,\ldots,X_d^{T}$ (see~\cite{Procesi76},~\cite{Zubkov99}), where $p\neq2$, 

\item[$\bullet$] $Y_1,\ldots,Y_d$ (see~\cite{ZubkovI} or~\cite{Lopatin_so_inv}), where $p\neq2$, 

\item[$\bullet$] $Z_1,\ldots,Z_d$ (see~\cite{ZubkovI} or~\cite{Lopatin_so_inv}), where $p\neq2$, 
\end{enumerate}
respectively.  Note that in case $p=0$ or $p>n$ the algebras of invariants considered above are generated by $\tr(b)$, where $b$ is the same as above. 
In what follows, whenever we consider the orthogonal group $O(n)$ or algebras $R^{O(n)}$, $R_{+}^{O(n)}$, $R_{-}^{O(n)}$, we assume that $p\neq2$. The ideal of relations between the generators of $R^{GL(n)}$ was described in~\cite{Razmyslov74, Procesi76, Zubkov96}. In case $p=0$ relations  between generators of  $R^{O(n)}$  were computed in~\cite{Procesi76} and in case $p\neq2$ relations between generators of matrix $O(n)$-invariants were obtained in~\cite{Lopatin_Orel} and~\cite{Lopatin_Ofree}.

The elements of $R$, $R_{+}$, $R_{-}$, respectively, can be interpreted as polynomial functions from 
 \begin{enumerate}
 \item[$\bullet$] $H=M(n)\oplus \cdots\oplus M(n)$, 
 
 \item[$\bullet$] $H_{+}=S_{+}(n)\oplus \cdots\oplus S_{+}(n)$,
  
 \item[$\bullet$] $H_{-}=S_{-}(n)\oplus \cdots\oplus S_{-}(n)$,
 \end{enumerate}
 respectively, to $\FF$ as follows: $x_{ij}(k)$ sends $u=(A_1,\ldots,A_d)\in H$ to $(A_k)_{i,j}$. We can consider $H$ as $GL(n)$-module by the formula: $g\cdot v  = (g A_1 g^{-1}, \ldots, g A_d g^{-1})$ for $g\in GL(n)$ and $v=(A_1,\ldots,A_d)\in H$. Then $H_{+}$ and $H_{-}$ are $O(n)$-modules. 
 
 Assume that $(G,A,V)$ is one of the following triples: $(GL(n),R,H)$, $(O(n),R,H)$, $(O(n),R_{+},H_{+})$, $(O(n),R_{-},H_{-})$. Then
 $$A^G=\{f\in A\,|\,f(g\cdot v)=f(v) \text{ for all }g\in G,\; v\in V\}$$
 (see the papers above mentioned, where the generators for the algebras of invariants were found).
 
 The notion of separating invariants was  introduced in 2002 by Derksen and Kemper~\cite{DerksenKemper_book} as a weaker concept than generating invariants.   Given a subset $S$ of $A^G$, we say that elements $u,v$ of $V$ {\it are separated by $S$} if  exists an invariant $f\in S$ with $f(u)\neq f(v)$. If  $u,v\in V$ are separated by $A^G$, then we simply say that they {\it are separated}. A subset $S\subset A^G$ of the invariant ring is called {\it separating} if for any $u, v$ from $V$ that are separated we have that they are separated
 by $S$. Separating sets over finite fields were studied in a recent paper~\cite{Kemper_Lopatin_Reimers_2}. 
 
 It follows from more general result of Domokos~\cite{Domokos_2007} and Draisma, Kemper, Wehlau~\cite{DraismaKemperWehlau_2008} that for any $n>1$ there exists $C(n)$, which does not depend on $d$, such that the set of all elements of $A^G$ of degree less than $C(n)$ is separating for all $d$. On the other hand, in case $0<p\leq n$ a similar statement is not valid for generating systems for $R^{GL(n)}$  (see~\cite{DKZ_2002}) and $R^{O(n)}$  (see~\cite{Lopatin_IndOrt}).
 
In~\cite{Lopatin_separating2x2} it was established that the set  
$$\begin{array}{cl}
\tr(X_i), \det(X_i),& 1\leq i\leq d,\\
\tr(X_i X_j), & 1\leq i<j\leq d,\\
\tr(X_i X_j X_k),&  1\leq i<j<k\leq d.\\
\end{array}
$$
is a minimal (by inclusion) separating set for the algebra of matrix invariants $R^{GL(2)}$ for any $d\geq1$.  The case of three nilpotent $3\times 3$ matrices over an algebraically closed field of zero characteristic was considered in~\cite{Cavalcante_Lopatin_1}.  A minimal separating set for the algebra $R^{SL(2)\times SL(2)}$ of semi-invariants of $2\times 2$ matrices over an arbitrary algebraically closed field was explicitly described in~\cite{Domokos20, Domokos20Add}. 

In this paper we will establish minimal (by inclusion) separating sets for the following algebras of invariants:
\begin{enumerate}
 \item[$\bullet$]  $R^{GL(3)}$ for $d=2$ (see~Theorem~\ref{theo1xx}), namely, 
 $$\begin{array}{c}
\si_t(X_i),\; i=1,2,\;t=1,2,3;\\
\tr(X_1 X_2),\;  \tr(X_1^2 X_2),\;\tr(X_1 X_2^2),\; \tr(X_1^2 X_2^2),\;
\tr(X_1^2 X_2^2 X_1 X_2);\\
\end{array} 
$$

\item[$\bullet$] $R^{O(3)}_{-}$ for all $d>0$, where $p\neq 2$ (see Theorem~\ref{theo3xx}), namely,
$$\begin{array}{c}
\si_2(Z_i);\;\tr(Z_i Z_j),\; i<j;\; \tr(Z_i Z_j Z_k),\;  i<j<k,\\
\end{array}
$$
where $1\leq i,j,k\leq d$;

\item[$\bullet$] $R^{O(4)}_{-}$ for $d=2$, where $p\neq 2$  (see Theorem~\ref{theo4xx}), namely,
$$\begin{array}{c}
\si_2(Z_i),\; \det(Z_i),\;i=1,2;\\
\tr(Z_1 Z_2),\;\;  \si_2(Z_1 Z_2),\; \tr(Z_1^2 Z_2^2),\; \tr(Z_1^3 Z_2),\; \tr(Z_1 Z_2^3);\\
\end{array}
$$

\item[$\bullet$] $R_{+}^{O(3)}$ for $d=2$, where $p\neq 2$  (see Theorem~\ref{theo-O3-symm2}), namely,
$$\begin{array}{c}
\si_t(Y_i),\; i=1,2,\;t=1,2,3;\;\; \tr(Y_1 Y_2),\;  \tr(Y_1^2 Y_2),\;\tr(Y_1 Y_2^2),\; \tr(Y_1^2 Y_2^2).\\
\end{array}
$$
We will establish that the last set is a minimal generating set for $R_{+}^{O(3)}$ for $d=2$.
\end{enumerate}

We will also construct a minimal generating set for  $R_{+}^{O(3)}$ for $d=3$  (see Theorem~\ref{theo-O3-symm3}), namely,
$$\begin{array}{c}
\si_t(Y_i),\; i,t\in\{1,2,3\};\;\;
\tr(Y_i Y_j),\;  \tr(Y_i^2 Y_j),\;\tr(Y_i Y_j^2),\; \tr(Y_i^2 Y_j^2),\; 1\leq i<j\leq 3;\\
\tr(Y_1 Y_2 Y_3),\\
\tr(Y_1^2 Y_2 Y_3),\;\tr(Y_2^2 Y_1 Y_3),\;\tr(Y_3^2 Y_1 Y_2),\;\;
\tr(Y_1^2 Y_2^2 Y_3),\;\tr(Y_1^2 Y_3^2 Y_2),\;\tr(Y_2^2 Y_3^2 Y_1).\\
\end{array}
$$

Note that over a field of real numbers a minimal generating set for $R_{+}^{O(3)}$ for each $d>0$ was constructed by Spencer and Rivlin in series of papers~\cite{Spencer1,Spencer2,Spencer3,Spencer4} (see also~\cite{Spencer_book}).


\subsection{Notations}
\label{section_notations}

For a monomial $c\in R$ denote by $\deg{c}$ its {\it degree} and by $\mdeg{c}$ its {\it multidegree}, i.e., $\mdeg{c}=(t_1,\ldots,t_d)$, where $t_k$ is the total degree of the monomial $c$ in $x_{ij}(k)$, $1\leq i,j\leq n$, and $\deg{c}=t_1+\cdots+t_d$. Obviously, the algebras $R^{GL(n)}$, $R^{O(n)}$, $R_{+}^{O(n)}$, $R_{-}^{O(n)}$ have $\NN$-grading by degrees and $\NN^d$-grading by multidegrees. 

Denote by $E_{ij}$ the matrix such that the $(i,j)^{\rm th}$ entry is equal to one and the rest of entries are zeros. 
For short, we write $J_3$ for $E_{12} + E_{23}$.

\section{Decomposable invariants}\label{section_dec}
In this section we assume $n=3$ and $d>0$. We consider some fact about decomposable invariants that we are going to apply later.  

Assume that a triple $(G,A,V)$ is the same as in Section~\ref{section_intro}. We say that an $\NN$-homogeneous invariant $f\in A^G$ is {\it decomposable} and write $f\equiv0$ if $f$ is a polynomial in $\NN$-homogeneous invariants of $A^G$ of strictly lower degree. If $f$ is not decomposable, then we say that $f$ is {\it indecomposable} and write $f\not\equiv0$. In case $f-h\equiv0$ we write $f\equiv h$. Denote by $P_{\FF}$ the ring of polynomials in $Y_1,\ldots,Y_d$ without free terms and by $P$ the set of (non-empty) products of $Y_1,\ldots,Y_d$ and set  $P_{1}=P\sqcup \{I_3\}$.

Consider the surjective homomorphism $\Psi: R^{GL(n)}\to R_{+}^{O(n)}$ defined by $x_{ij}(k)\to x_{ij}(k)$ in case $i\geq j$ and by $x_{ij}(k)\to x_{ji}(k)$ otherwise. Note that the image of $\tr(X_{i_1}\cdots X_{i_k})$ with respect to $\Psi$ is $\tr(Y_{i_1}\cdots Y_{i_k})$.

\begin{lemma}\label{lemma_dec}
Assume $p\neq 2$, $x,y\in P$ and $q\in P_{\FF}$. Then the next formulas hold in $R_{+}^{O(3)}$:
\begin{enumerate}
\item[(a)] $\tr(x y x^2 q)\equiv -\tr(x^2 y x q)$;

\item[(b)] $\tr(Y_1^2 Y_2^i Y_1 Y_3^j)\equiv 0$ for $i,j=1,2$;

\item[(c)] $\tr(Y_1^2 Y_2^2 Y_1 Y_2)\equiv 0$;

\item[(d)] $\tr(Y_1^2 Y_2^2 Y_3^2)\equiv 0$ if $p\neq 3$;

\item[(e)] $\tr(y^2 x^2 y x q)\equiv -\tr(x^2 y^2 x y q)$; 

\item[(f)] $\tr(Y_1^2 Y_2^2 Y_1 Y_2 Y_3^i)\equiv 0$ for $i=1,2$;


\end{enumerate}
\end{lemma}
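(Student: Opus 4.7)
The engine for all six identities is Cayley--Hamilton for $3\times 3$ matrices together with its polarizations and the reversal symmetry of traces of symmetric matrices. From $X^3=\sigma_1(X)X^2-\sigma_2(X)X+\sigma_3(X)E$ one obtains $\tr(X^3W)\equiv 0$ for any composite matrix $X$ and any $W$. Substituting $X\mapsto X+tY$ in Cayley--Hamilton and extracting the coefficient of $t$ yields the polarized identity
\[
(\star)\qquad \tr(X^2YZ)+\tr(XYXZ)+\tr(YX^2Z)\equiv 0
\]
for any three $3\times 3$ matrices $X,Y,Z$. Fully polarizing Cayley--Hamilton in three independent directions similarly yields, for any matrices $A,B,C,Z$, the $6$-term identity $\sum_{\sigma\in S_3}\tr(\sigma(A)\sigma(B)\sigma(C)Z)\equiv 0$. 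Finally, since each $Y_i$ is symmetric, $\tr(Y_{i_1}\cdots Y_{i_k})=\tr(Y_{i_k}\cdots Y_{i_1})$.

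Part (a) is immediate: $(\star)$ with $X:=x$, $Y:=y$, $Z:=xq$ produces $\tr(x^2yxq)+\tr(xyx^2q)+\tr(yx^3q)\equiv 0$, and the last summand is decomposable by Cayley--Hamilton applied to $x$. Parts (b) and (c) follow the uniform pattern ``apply (a), then use cyclic rotation and reversal symmetry to identify the result with the target itself, giving $2\,\tr(w)\equiv 0$, whence $\tr(w)\equiv 0$ since $p\neq 2$''. Part (d) is the cleanest application of $(\star)$: with $X:=Y_1$, $Y:=Y_2^2Y_3^2$, $Z:=E$, each of the three summands is cyclically equal to $\tr(Y_1^2Y_2^2Y_3^2)$, so $3\,\tr(Y_1^2Y_2^2Y_3^2)\equiv 0$, which explains the hypothesis $p\neq 3$.

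For (e) I would use the $6$-term polarization twice. Applied with $(A,B,C):=(y^2,x^2,yx)$ and $Z:=q$, three of the six summands contain $y^3$ or $x^3$ and hence are decomposable, leaving the three-term relation $\tr(y^2x^2yxq)+\tr(x^2yxy^2q)+\tr(yxy^2x^2q)\equiv 0$; an analogous application with $(A,B,C):=(x^2,y^2,xy)$ gives $\tr(x^2y^2xyq)+\tr(y^2xyx^2q)+\tr(xyx^2y^2q)\equiv 0$. Two further applications of $(\star)$ with $(X,Y,Z):=(yx,y,xq)$ and $(X,Y,Z):=(xy,x,yq)$ each kill a term involving $(yx)^3$ or $(xy)^3$ and produce $\tr(yxy^2x^2q)\equiv -\tr(y^2xyx^2q)$ and $\tr(x^2yxy^2q)\equiv -\tr(xyx^2y^2q)$; substituting these into the two three-term relations and adding, the four auxiliary monomials cancel in pairs, leaving $\tr(y^2x^2yxq)+\tr(x^2y^2xyq)\equiv 0$, which is (e). For (f) I would apply (e) with $x:=Y_2$, $y:=Y_1$, $q:=Y_3^i$ and use reversal to identify one of the terms with $\tr(Y_1^2Y_2^2Y_3^iY_1Y_2)$; two further applications of $(\star)$---with $(X,Y,Z):=(Y_1,Y_2^2,Y_1Y_2Y_3^i)$ and with $(X,Y,Z):=(Y_2,Y_1^2Y_2,Y_1Y_3^i)$, each discarding a term containing $Y_1^3$ or $Y_2^3$---produce two further relations whose combination forces $2\,\tr(Y_1^2Y_2^2Y_3^iY_1Y_2)\equiv 0$, and hence $\tr(Y_1^2Y_2^2Y_1Y_2Y_3^i)\equiv 0$ by $p\neq 2$.

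The main obstacle, in my judgment, is finding the right substitutions for (e) and (f): one must recognize that three of the six summands in each $6$-term polarization involve a hidden cube (so that the Cayley--Hamilton reduction on a composite matrix absorbs them into the decomposable part), and, more subtly, that carefully chosen applications of $(\star)$ to composite matrices like $yx$, $xy$, $Y_2^2$, or $Y_1^2Y_2$ provide precisely the auxiliary identities needed to cancel the unwanted terms. Once these choices are made, the verifications are short; the challenge is organizational rather than computational.
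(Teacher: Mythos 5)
Your parts (a), (b), (c), (e) and (f) are correct, and I checked that the substitutions you propose for (e) and (f) do combine as claimed; this gives a self-contained derivation where the paper instead cites relations from \cite{Lopatin_Comm1} and pushes them through $\Psi$. However, part (d) contains a genuine error: the identity $(\star)$ may not be applied with $Z=E$. Indeed, $(\star)$ comes from multiplying the polarized Cayley--Hamilton identity by $Z$ and taking traces, and the discarded right-hand side is
$\si_1(Y)\tr(X^2Z)+\si_1(X)\bigl(\tr(XYZ)+\tr(YXZ)\bigr)-\si_2'(X;Y)\tr(XZ)-\si_2(X)\tr(YZ)+\si_3'(X;Y)\tr(Z)$,
which is decomposable only because, for a nonconstant $Z$, every summand is a product of two invariants of strictly smaller degree. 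For $Z=E$ the last summand is $3\,\si_3'(X;Y)$, an invariant of the \emph{same} degree as the left-hand side; since $\si_3'(X;Y)=\tr(\mathrm{adj}(X)Y)=\tr(X^2Y)-\si_1(X)\tr(XY)+\si_2(X)\tr(Y)$, one has $\si_3'(X;Y)\equiv\tr(X^2Y)$. Hence with $X=Y_1$, $Y=Y_2^2Y_3^2$, $Z=E$ your relation reduces to $3\tr(Y_1^2Y_2^2Y_3^2)\equiv 3\tr(Y_1^2Y_2^2Y_3^2)$, a tautology, not to $3\tr(Y_1^2Y_2^2Y_3^2)\equiv 0$.

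Moreover, no argument of the kind you use can prove (d): your computation for (d) involves only trace identities valid for arbitrary $3\times 3$ matrices and never uses symmetry of the $Y_i$, so if it were correct it would show that $\tr(X_1^2X_2^2X_3^2)$ is decomposable in $R^{GL(3)}$; but this element occurs in the minimal generating set of $R^{GL(3)}$ for $d=3$ (Theorem~1 of \cite{Lopatin_Sib}; it is the element $h_1$ of the set $G_1$ in the proof of Theorem~\ref{theo-O3-symm3}), hence is indecomposable there. The transpose symmetry is essential: the paper proves (d) by combining a genuine $GL(3)$-relation, $\tr(X_1^2X_2^2X_3^2)\equiv-\tr(X_1^2X_3^2X_2^2)$, valid for $p\neq 3$ (part~7 of Lemma~18 of \cite{Lopatin_Comm2}), with $\tr(A^T)=\tr(A)$, which for symmetric matrices identifies the right-hand side with $-\tr(Y_1^2Y_2^2Y_3^2)$ and yields $2\tr(Y_1^2Y_2^2Y_3^2)\equiv 0$, whence the claim for $p\neq 2$. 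You need either this cited relation or some substitute that genuinely uses $Y_i^T=Y_i$; as written, your (d) has no proof, and note also that your claimed mechanism would make (d) depend on $p\neq 3$ only, whereas the correct argument uses $p\neq 2$ as well.
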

\begin{proof}
\noindent {\bf (a)} Applying the homomorphism $\Psi$ to formula~(20) of~\cite{Lopatin_Comm1} we obtain the required. 

For the sake of completeness, we show that part (a) can also be proven by straightforward calculations. Namely, part (a) follows from the next equality, which holds for every $3\times 3$ matrices $A,B,C$ over any commutative ring:
$$\begin{array}{c}
\tr(A^2BAC) + \tr(ABA^2C) = \\
\tr(A) \biggl(- \tr(A^2CB) +  \tr(ABAC) + \tr(BA^2C)  - \tr(AB) \tr(AC) - \tr(B) \tr(A^2C)  \biggr) + \\
+\si_2(A) \biggl(  \tr(ACB) - \tr(BAC) + \tr(B) \tr(AC) \biggl)\\ 
- \det(A)\tr(BC)  + \tr(A^3C) \tr(B)  + \tr(A^2C) \tr(AB)  + \tr(AC) \tr(A^2B) .\\
\end{array}$$

\medskip
\noindent {\bf (b) } Applying the equality $\tr(A^T)=\tr(A)$ that holds for any matrix $A$  and part~(a) of the lemma we obtain 
$$ \tr(Y_1^2 Y_2^i Y_1 Y_3^j) = \tr(Y_3^j Y_1 Y_2^i Y_1^2) = \tr(Y_1 Y_2^i Y_1^2 Y_3^j) \equiv -\tr(Y_1^2 Y_2^i Y_1 Y_3^j)$$
in $R_{+}^{O(3)}$. Since $p\neq2$, the proof of part~(b) is completed. 

\medskip
\noindent {\bf (c) } Making the substitution $Y_3\to Y_2$ in part~(b) of this lemma, where $i=2$ and $j=1$, we obtain the required.

\medskip
\noindent {\bf (d) } Since $p\neq 3$, applying the homomorphism $\Psi$ to part~7 of Lemma~18
from~\cite{Lopatin_Comm2} we obtain
$$\tr(Y_1^2 Y_2^2 Y_3^2)\equiv -\tr(Y_1^2 Y_3^2 Y_2^2) = -\tr(Y_1^2 Y_2^2 Y_3^2).$$
Since we also have that $p\neq2$, the proof of part~(d) is completed.

\medskip
\noindent {\bf (e) } Formula~(14) of~\cite{Lopatin_Comm1}, which is valid for any $p$,  together with Lemma~3 of~\cite{Lopatin_Comm1} imply that the analogue of part~(e) of this lemma is valid for $R^{GL(3)}$. The application of homomorphism $\Psi$ concludes the proof of part~(e).

\medskip
\noindent {\bf (f) } Applying two times part~(a) of the lemma to 
$f=\tr(Y_3^i Y_2 Y_1 Y_2^2 Y_1^2)$ we obtain that 
$$f\equiv\tr(Y_3^i Y_2^2 Y_1^2 Y_2 Y_1)\equiv -\tr(Y_3^i Y_1^2 Y_2^2 Y_1 Y_2),$$
where the second equivalence follows from part~(e) of the lemma. On the other hand, $f=\tr(Y_3^i Y_1^2 Y_2^2 Y_1 Y_2)$ and the proof of part~(f) is completed. 

\end{proof}

\section{Invariants of two $3\times 3$ matrices}
\label{section_newresult_2GL3}

\begin{theo}\label{theo1xx}
For $d=2$ the following set is a minimal separating set for the algebra $R^{GL(3)}$ of  $GL(3)$-invariants of two matrices: 
$$\begin{array}{c}
\tr(X_i),\;\si_2(X_i),\;\det(X_i),\; i=1,2,\\
\tr(X_1 X_2),\;  \tr(X_1^2 X_2),\;\tr(X_1 X_2^2),\; \tr(X_1^2 X_2^2),\\
\tr(X_1^2 X_2^2 X_1 X_2) .\\
\end{array}
$$
\end{theo}
\begin{proof}
Denote the set from the formulation of the theorem by $S$. It is well-known that $S$ generates $R^{GL(3)}$ (for example, see~\cite{Lopatin_Sib}). Thus $S$ is a separating set. To prove that $S$ is a minimal separating set we will show that for any  element $f\in S$ the set $S_0=S\backslash \{f\}$ is not separating. 

The case of $f$ from the list $\tr(X_i)$, $\si_2(X_i)$, $\det(X_i)$ $(i=1,2)$ is obvious. 

Assume $f=\tr(X_1 X_2)$. Then for $A_1=B_1=J_3$ (see Section~\ref{section_notations}), $A_2=E_{32}$, $B_2=E_{12}$ we have that $(A_1,A_2)$ and  $(B_1, B_2)$ are not separated by $S_0$, but $f$ separates $(A_1,A_2)$ and $(B_1, B_2)$. Thus $S_0$ is not a separating set. 

Assume $f=\tr(X_1^2 X_2)$. Then for 
$$A_1=B_1=J_3,\qquad
A_2 = \left(
\begin{array}{ccc}
0 & 1 & 1\\ 
0 & 1 & -1\\ 
0 & 1 &-1 \\
\end{array}
\right),\quad 
B_2 = \left(
\begin{array}{ccc}
0 & 0 & 0\\ 
1 & 0 & 0\\ 
-1 & 0 &0 \\
\end{array}
\right)
$$
we have that $(A_1,A_2)$ and  $(B_1, B_2)$ are not separated by $S_0$, but $f$ separates $(A_1,A_2)$ and $(B_1, B_2)$. Thus $S_0$ is not a separating set. Similarly we consider the case of $f=\tr(X_1 X_2^2)$.

Assume $f=\tr(X_1^2 X_2^2)$. Then for
$$A_1=B_1=J_3,\qquad
A_2 = \left(
\begin{array}{ccc}
0 & 1 & 0\\ 
0 & -1 & 0\\ 
1 & 1 & 1 \\
\end{array}
\right),\quad 
B_2 = \left(
\begin{array}{ccc}
0 & 1 & 0\\ 
1 & 0 & 0\\ 
1 & 0 &0 \\
\end{array}
\right)
$$
we have that $(A_1,A_2)$ and  $(B_1, B_2)$ are not separated by $S_0$, but $f$ separates $(A_1,A_2)$ and $(B_1, B_2)$. Thus $S_0$ is not a separating set.

Assume $f=\tr(X_1^2 X_2^2 X_1 X_2)$. Then for
$$A_1=B_1=J_3,\qquad
A_2 = \left(
\begin{array}{ccc}
1 & 0 & 0\\ 
1 & -1 & 0\\ 
1 & -1 & 0 \\
\end{array}
\right),\quad 
B_2 = \left(
\begin{array}{ccc}
-1 & 0 & 0\\ 
0 & 0 & 1\\ 
1 & 0 &1 \\
\end{array}
\right)
$$
we have that $(A_1,A_2)$ and  $(B_1, B_2)$ are not separated by $S_0$, but $f$ separates $(A_1,A_2)$ and $(B_1, B_2)$. Thus $S_0$ is not a separating set. The theorem is proven.
\end{proof}

\section{Orthogonal invariants of several $3\times 3$ skew-symmetric  matrices}
\label{section_newresult_skewO3}

\begin{lemma}\label{lemma31xx}
The set  
$$
\si_2(Z_1),\; \si_2(Z_2),\; \tr(Z_1 Z_2)
$$
is a minimal separating set for the algebra $I=R_{-}^{O(3)}$ for $d=2$ in case $p\neq 2$.
\end{lemma}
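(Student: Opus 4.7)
\medskip\noindent\textbf{Proof plan.} The plan has two parts: first I will show that $S = \{\si_2(Z_1), \si_2(Z_2), \tr(Z_1 Z_2)\}$ is separating by proving that it already generates $I = R_{-}^{O(3)}$ for $d=2$, and then I will establish minimality by exhibiting, for each $f \in S$, an explicit pair of points in $H_{-}$ separated by $f$ but not by $S \setminus \{f\}$.

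For the generation step I would use the classical description of $I$ as the algebra generated by $\si_t(b)$ with $1 \le t \le 3$ and $b$ a monomial in $Z_1, Z_2$, and reduce each such generator to a polynomial in $S$. Four ingredients do the work. (i) Skew-symmetry combined with $\tr(A) = \tr(A^T)$ and cyclicity gives $\tr(Z_{i_1}\cdots Z_{i_k}) = (-1)^k \tr(Z_{i_k}\cdots Z_{i_1})$, so every odd-length trace (such as $\tr(Z_i)$, $\tr(Z_i^2 Z_j)$, $\tr(Z_1 Z_2 Z_1)$) vanishes in characteristic $\neq 2$. (ii) For any non-constant monomial $b$, $\det(b)$ factors through $\det(Z_i) = 0$, so $\si_3(b) = 0$. (iii) $\tr(Z_i^2) = -2\,\si_2(Z_i)$, and Cayley--Hamilton in dimension three gives $Z_i^3 = -\si_2(Z_i)\,Z_i$, which lets one cut every $Z_i$-block of length $\ge 3$. (iv) The remaining even-length mixed traces are handled by the elementary identities
\[ \tr\bigl((Z_1 Z_2)^2\bigr) = \tfrac{1}{2}\tr(Z_1 Z_2)^2, \qquad \tr(Z_1^2 Z_2^2) = \tfrac{1}{4}\tr(Z_1 Z_2)^2 + \si_2(Z_1)\,\si_2(Z_2), \]
combined with Cayley--Hamilton applied to the $3 \times 3$ matrix $Z_1 Z_2$ (whose $\si_3$ vanishes and whose $\si_2$ equals $\tfrac{1}{4}\tr(Z_1 Z_2)^2$); longer mixed monomials reduce iteratively to shorter ones via the same three tools. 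Finally $\si_2(b) = \tfrac{1}{2}(\tr(b)^2 - \tr(b^2))$ reduces to $S$ once $\tr(b)$ and $\tr(b^2)$ do. Hence $S$ generates $I$, so $S$ is separating.

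For the minimality step, fix $f \in S$ and set $S_0 = S \setminus \{f\}$. If $f = \si_2(Z_1)$, take $(A_1, A_2) = (E_{21} - E_{12},\, 0)$ against $(B_1, B_2) = (0,\,0)$: then $\si_2(Z_2)$ and $\tr(Z_1 Z_2)$ vanish on both pairs, while $\si_2(Z_1) = 1$ on the first and $0$ on the second, so $S_0$ does not separate them but $f$ does. The case $f = \si_2(Z_2)$ is symmetric. If $f = \tr(Z_1 Z_2)$, take $A_1 = A_2 = E_{21} - E_{12}$ against $B_1 = E_{21}-E_{12}$, $B_2 = E_{31}-E_{13}$: both pairs give $\si_2(Z_i) = 1$ for $i=1,2$, while $\tr(A_1 A_2) = -2 \neq 0 = \tr(B_1 B_2)$.

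The main work sits in the generation step: one has to be sure that every monomial trace really does collapse into a polynomial in $S$. The four reductions above cover all the cases that appear, but careful bookkeeping is needed to confirm no exotic monomial requires an additional generator. By contrast, the minimality counterexamples are immediate $3 \times 3$ matrix computations.
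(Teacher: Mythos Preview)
Your proposal is correct and follows the paper's two-step structure exactly: show $S$ generates $R_{-}^{O(3)}$ (hence separates), then exhibit a separating pair for each $f\in S$ that $S\setminus\{f\}$ fails to distinguish. The only difference is that the paper dispatches the generation step by citing a known result (Theorem~1.4 of~\cite{Lopatin_Oskew}), whereas you sketch a direct reduction via skew-symmetry, Cayley--Hamilton, and the identities $\sigma_2(Z_1Z_2)=\tfrac14\tr(Z_1Z_2)^2$ and $\tr(Z_1^2Z_2^2)=\tfrac14\tr(Z_1Z_2)^2+\sigma_2(Z_1)\sigma_2(Z_2)$; these identities are correct (they follow from the cross-product identification $Z_i\leftrightarrow v_i$, under which $Z_1Z_2=v_2v_1^T-(v_1\!\cdot\! v_2)E$), and together with $Z_i^3=-\sigma_2(Z_i)Z_i$ they do collapse every $\sigma_t(b)$ to a polynomial in $S$, so your argument is self-contained where the paper's is not. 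Your minimality witnesses differ from the paper's (for $f=\tr(Z_1Z_2)$ the paper uses $A_1=A_2=B_1=-B_2=E_{12}-E_{21}$), but yours work equally well.
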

\begin{proof}
Denote the set from the formulation of the lemma by $S$. The set $S$ generates the algebra of invariants $I$ (for example, see Theorem 1.4 of~\cite{Lopatin_Oskew}). Thus $S$ is a separating set. To prove that $S$ is a minimal separating set we will show that for any  element $f\in S$ the set $S_0=S\backslash \{f\}$ is not separating. 

The case of $f=\si_2(Z_i)$ ($i=1,2$) is obvious.

Assume $f=\tr(Z_1 Z_2)$. Then for $A_1=A_2=B_1=-B_2=E_{12}-E_{21}$ we have that $(A_1,A_2)$ and $(B_1,B_2)$ are not separated by $S_0$, but $f$ separates $(A_1,A_2)$ and $(B_1,B_2)$. Thus $S_0$ is not a separating set. The lemma is proven.
\end{proof}

\begin{theo}\label{theo3xx}
Assume $d>0$ and  $p\neq 2$. Then the set  
$$\begin{array}{c}
\si_2(Z_i),\;1\leq i\leq d,\\
\tr(Z_i Z_j),\;  1\leq i<j\leq d,\\
\tr(Z_i Z_j Z_k),\;  1\leq i<j<k\leq d,\\
\end{array}
$$
is a minimal separating set for the algebra $R_{-}^{O(3)}$ of $O(3)$-invariants of $d$ skew-symmetric matrices.
\end{theo}
\begin{proof} Denote the set from the formulation of the theorem by $S$. By Theorem 1.4 of~\cite{Lopatin_Oskew} the set $S$ generates the algebra of invariants $R^{O(3)}_{-}$. Hence $S$ is a separating set for $R_{-}^{O(3)}$. Applying Lemma~\ref{lemma31xx} we obtain that to prove that $S$ is a minimal separating set it is enough to show that in case $d=3$ the set $S_0=S\backslash \{\tr(Z_1Z_2Z_3)\}$ is not separating. 

For $A_1=B_1=E_{12}-E_{21}$, $A_3=B_3=E_{13}-E_{31}$, and 
$$A_2 = \left(
\begin{array}{ccc}
0 & 0 & 1\\ 
0 & 0 & 1\\ 
-1 & -1 & 0 \\
\end{array}
\right),\quad 
B_2 = \left(
\begin{array}{ccc}
0 & 0 & 1\\ 
0 & 0 & -1\\ 
-1 & 1 &0 \\
\end{array}
\right)
$$
we have that $u=(A_1,A_2,A_3)$ and  $v=(B_1,B_2,B_3)$ are not separated by $S_0$, but $\tr(Z_1 Z_2 Z_3)$ separates $u$ and $v$ in case $p\neq2$. Thus $S_0$ is not a separating set.
The theorem is proven.
\end{proof}

\section{Orthogonal invariants of two $4\times 4$ skew-symmetric  matrices}
\label{section_newresult_2skewO4}

\begin{theo}\label{theo4xx}
Assume $d=2$ and $p\neq 2$. Then the set  
$$\begin{array}{c}
\si_2(Z_i),\; \det(Z_i),\;i=1,2,\\
\tr(Z_1 Z_2),\;  \si_2(Z_1 Z_2),\; \tr(Z_1^2 Z_2^2),\; \tr(Z_1^3 Z_2),\;\tr(Z_1 Z_2^3),\\
\end{array}
$$
is a minimal separating set for the algebra $R_{-}^{O(4)}$ of $O(4)$-invariants of two skew-symmetric matrices.
\end{theo}
\begin{proof} Assume $d=2$. Denote the set from the formulation of the theorem by $S$. By Theorem~1.1 of~\cite{Lopatin_mgs_skewO4},  the set $S$ generates the algebra of invariants $R^{O(4)}_{-}$. Hence $S$ is a separating set for $R_{-}^{O(4)}$. To prove that $S$ is a minimal separating set we will show that for any  element $f\in S$ the set $S_0=S\backslash \{f\}$ is not separating.

If $f$ is $\si_2(Z_1)$ or $\tr(Z_1 Z_2)$, then $S_0$ is not separating by Lemma~\ref{lemma31xx}. 

Assume $f=\det(Z_1)$. For 
$$A_1 = \left(
\begin{array}{cccc}
 0 & 0 & 0 & 1\\ 
 0 & 0 & 1 & 0\\
 0 &-1 & 0 & 0\\ 
-1 & 0 & 0 & 0\\ 
\end{array}
\right),\qquad
B_1 = \left(
\begin{array}{cccc}
 0 & 1 & 0 & 1\\ 
-1 & 0 & 0 & 0\\
 0 & 0 & 0 & 0\\ 
-1 & 0 & 0 & 0\\ 
\end{array}
\right),
$$
$A_2 = B_2 = 0$ we have that $u=(A_1,A_2)$ and  $v=(B_1,B_2)$ are not separated by $S_0$, but $\det(Z_1)$ separates $u$ and $v$. Thus $S_0$ is not a separating set.

Assume $f=\si_2(Z_1 Z_2)$. For 
$$A_1 = \left(
\begin{array}{cccc}
 0 & 1 & 0 & 0\\ 
-1 & 0 & 0 & 0\\
 0 & 0 & 0 &-1\\ 
 0 & 0 & 1 & 0\\ 
\end{array}
\right),\quad
A_2 = B_2 = \left(
\begin{array}{cccc}
 0 & 0 & 1 & 0\\ 
 0 & 0 & 0 & 1\\
-1 & 0 & 0 & 0\\ 
 0 &-1 & 0 & 0\\ 
\end{array}
\right),
\quad
B_1 = \left(
\begin{array}{cccc}
 0 &-1 & 0 & 0\\ 
 1 & 0 & 0 & 0\\
 0 & 0 & 0 &-1\\ 
 0 & 0 & 1 & 0\\ 
\end{array}
\right)
$$
we have that $u=(A_1,A_2)$ and  $v=(B_1,B_2)$ are not separated by $S_0$, but $\si_2(Z_1 Z_2)$ separates $u$ and $v$. Thus $S_0$ is not a separating set.

Assume $f=\tr(Z_1^2 Z_2^2)$. For 
$$A_1 = \left(
\begin{array}{cccc}
 0 & 1 & 1 & 1\\ 
-1 & 0 &-1 &-2\\
-1 & 1 & 0 &-1\\ 
-1 & 2 & 1 & 0\\ 
\end{array}
\right),\qquad
A_2 = \left(
\begin{array}{cccc}
 0 &-1 & 0 & 1\\ 
 1 & 0 & 0 & 0\\
 0 & 0 & 0 & 0\\ 
-1 & 0 & 0 & 0\\ 
\end{array}
\right),$$
$$B_1 = \left(
\begin{array}{cccc}
 0 &-2 & 0 &-1\\ 
 2 & 0 & 0 & 2\\
 0 & 0 & 0 & 0\\ 
 1 &-2 & 0 & 0\\ 
\end{array}
\right),
\qquad
B_2 = \left(
\begin{array}{cccc}
 0 & 0 & 1 & 0\\ 
 0 & 0 & 0 & 0\\
-1 & 0 & 0 & 1\\ 
 0 & 0 &-1 & 0\\ 
\end{array}
\right)
$$
we have that $u=(A_1,A_2)$ and  $v=(B_1,B_2)$ are not separated by $S_0$, but $\tr(Z_1^2 Z_2^2)$ separates $u$ and $v$. Thus $S_0$ is not a separating set.

Assume $f=\tr(Z_1 Z_2^3)$. For 
$$A_1 = B_1 = \left(
\begin{array}{cccc}
 0 & 0 & 1 & 1\\ 
 0 & 0 & 1 & 1\\
-1 &-1 & 0 & 0\\ 
-1 &-1 & 0 & 0\\ 
\end{array}
\right),\;
A_2 = \left(
\begin{array}{cccc}
 0 & 1 & 0 & 1\\ 
-1 & 0 & 0 &-1\\
 0 & 0 & 0 & 1\\ 
-1 & 1 &-1 & 0\\ 
\end{array}
\right),\;
B_2 = \left(
\begin{array}{cccc}
 0 & 1 & 0 &-1\\ 
-1 & 0 & 0 & 1\\
 0 & 0 & 0 & 1\\ 
 1 &-1 &-1 & 0\\ 
\end{array}
\right)
$$
we have that $u=(A_1,A_2)$ and  $v=(B_1,B_2)$ are not separated by $S_0$, but $\tr(Z_1 Z_2^3)$ separates $u$ and $v$. Thus $S_0$ is not a separating set. The case of $f=\tr(Z_1^3 Z_2)$ is similar. The theorem is proven.
\end{proof}

\section{Orthogonal invariants of two $3\times 3$ symmetric matrices}
 
\begin{theo}\label{theo-O3-symm2} Assume that $p\neq 2$ and $d=2$. Then the set
$$\begin{array}{c}
\tr(Y_i),\;\si_2(Y_i),\;\det(Y_i),\; i=1,2,\\
\tr(Y_1 Y_2),\;  \tr(Y_1^2 Y_2),\;\tr(Y_1 Y_2^2),\; \tr(Y_1^2 Y_2^2)\\
\end{array}
$$
is a minimal generating set and a minimal separating set for the algebra of $O(3)$-invariants $R_{+}^{O(3)}$ of two symmetric matrices.
\end{theo} 
\begin{proof}
Denote by $S$ the set from the formulation of the theorem and by $S_{X}$ the result of substitutions $Y_i\to X_i$ ($i=1,2$) in $S$. It is well-known that $S_X\cup\{\tr(X_1^2 X_2^2 X_1 X_2)\}$ generates $R^{GL(3)}$ in case $d=2$ (for example, see~\cite{Lopatin_Sib}). Considering the  surjective homomorphism  $\Psi$ from Section~\ref{section_dec}  we obtain that $S\cup f$  generates $R_{+}^{O(3)}$, where $f=\tr(Y_1^2 Y_2^2 Y_1 Y_2)$. Since $f$ is decomposable in $R_{+}^{O(3)}$ by part~(b) of Lemma~\ref{lemma_dec}, then $S$ generates $R_{+}^{O(3)}$. Thus $S$ is a separating set. To prove that $S$ is a minimal separating set we will show that for any  element $f\in S$ the set $S_0=S\backslash \{f\}$ is not separating. 

Assume $f=\tr(Y_1 Y_2)$. Then for 
$$A_1=B_1=\symmmatr{0}{0}{1}{1}{0}{0}, \quad A_2=  \symmmatr{0}{0}{0}{0}{1}{0}, \quad B_2=\symmmatr{1}{0}{0}{-1}{0}{0}$$
we have that $(A_1,A_2)$ and  $(B_1, B_2)$ are not separated by $S_0$, but $f$ separates $(A_1,A_2)$ and $(B_1, B_2)$. Thus $S_0$ is not a separating set. 

Assume $f=\tr(Y_1^2 Y_2)$. Then for 
$$A_1=B_1=\symmmatr{0}{0}{0}{0}{1}{0}, \quad A_2=  \symmmatr{0}{1}{0}{0}{0}{0}, \quad B_2=\symmmatr{1}{0}{0}{-1}{0}{0}$$
we have that $(A_1,A_2)$ and  $(B_1, B_2)$ are not separated by $S_0$, but $f$ separates $(A_1,A_2)$ and $(B_1, B_2)$. Thus $S_0$ is not a separating set. 
The case of $f=\tr(Y_1 Y_2^2)$ is similar. 

Assume $f=\tr(Y_1^2 Y_2^2)$. Then for 
$$A_1=\symmmatr{0}{1}{0}{0}{0}{0}, \quad A_2=B_1=  \symmmatr{0}{0}{0}{0}{1}{0}, \quad B_2=\symmmatr{0}{0}{0}{1}{0}{-1}$$
we have that $(A_1,A_2)$ and  $(B_1, B_2)$ are not separated by $S_0$, but $f$ separates $(A_1,A_2)$ and $(B_1, B_2)$. Thus $S_0$ is not a separating set. 

The cases of $f=\si_k(Y_i)$ ($i=1,2$, $k=1,2,3$) are trivial. For the sake of completeness, we point out that in case $f=\det(Y_1)$ we consider
$$A_1=-B_1=\symmmatr{1}{0}{0}{1}{0}{-2},\quad A_2=B_2=0.$$

Therefore,  $S$ is a minimal separating set. This result together with the fact that  $S$ is a generating set imlply that $S$ is a minimal generating set for $R_{+}^{O(3)}$. 
\end{proof}

\section{Orthogonal invariants of three $3\times 3$ symmetric matrices}

\begin{theo}\label{theo-O3-symm3}
For $d=3$ consider the following set $S$:
$$\begin{array}{c}
\tr(Y_i),\;\si_2(Y_i),\;\det(Y_i),\; i=1,2,3,\\
\tr(Y_i Y_j),\;  \tr(Y_i^2 Y_j),\;\tr(Y_i Y_j^2),\; \tr(Y_i^2 Y_j^2),\; 1\leq i<j\leq 3,\\
\tr(Y_1 Y_2 Y_3),\\
\tr(Y_1^2 Y_2 Y_3),\;\tr(Y_2^2 Y_1 Y_3),\;\tr(Y_3^2 Y_1 Y_2),\\
\tr(Y_1^2 Y_2^2 Y_3),\;\tr(Y_1^2 Y_3^2 Y_2),\;\tr(Y_2^2 Y_3^2 Y_1).\\
\end{array}
$$
Then the set
\begin{enumerate}
\item[$\bullet$] $S$, if $p\neq 2,3$,

\item[$\bullet$] $S\cup \{\tr(Y_1^2 Y_2^2 Y_3^2)\}$, if $p=3$,
\end{enumerate}
is a minimal generating set for the algebra of $O(3)$-invariants $R_{+}^{O(3)}$ of three symmetric matrices.
\end{theo}
\begin{proof}Denote by $S_1$ the set from the formulation of the theorem. We split the proof into two parts, namely, at first we show that $S_1$ generates $R_{+}^{O(3)}$ and then we prove that $S_1$ is minimal.

\smallskip
\noindent{\bf (a)} We apply $\Psi$ to the (minimal) generating set for $R^{GL(3)}$ from Theorem~1 of~\cite{Lopatin_Sib} and obtain that $R_{+}^{O(3)}$ is generated by $S\cup G_1$ in case $p\neq 2,3$ and by $S\cup G_1\cup G_2$ in case $p=3$. Here $G_1$ is the set
$$\begin{array}{c}
f_{ij}=\tr(Y_i^2 Y_j^2 Y_i Y_j),\; i<j;\;\;\; \tr(Y_1 Y_3 Y_2); \;\;\; h_1=\tr(Y_1^2 Y_2^2 Y_3^2); \\
\tr(Y_i^2 Y_j Y_k),\; j>k;\;\;\; \tr(Y_i^2 Y_j^2 Y_k),\; i>j; \\
r_{ijk}=\tr(Y_i^2 Y_j Y_i Y_k),\; j<k;\;\;\; s_{ijk}=\tr(Y_i^2 Y_j^2 Y_i Y_k),\; 
\end{array}
$$
where $1\leq i,j,k\leq 3$ are pairwise different, and $G_2$ is the set
$$\begin{array}{c}
h_2=\tr(Y_1^2 Y_3^2 Y_2^2),\\
a_{ijk}=\tr(Y_i Y_j^2 Y_k^2 Y_j Y_k),\;\;\; b_{ijk}=\tr(Y_i^2 Y_j^2 Y_i Y_k^2),\;\;\; c_{ijk}=\tr(Y_i^2 Y_j^2 Y_k^2 Y_j Y_k),\\
\end{array}
$$
where $j<k$, $1\leq i,j,k\leq 3$ are pairwise different. Parts~(b), (c), (f) of Lemma~\ref{lemma_dec} imply that $f_{ij}$, $r_{ijk}$, $s_{ijk}$, $a_{ijk}$, $b_{ijk}$, $c_{ijk}$ are decomposable in $R_{+}^{O(3)}$. It follows from part~(d) of  Lemma~\ref{lemma_dec} that $h_1$ and $h_2$ are decomposable in $R_{+}^{O(3)}$ in case $p\neq3$. Finally, the equalities $\tr(Y_1 Y_3 Y_2) = \tr(Y_1 Y_2 Y_3)\in S$,  $\tr(Y_i^2 Y_j Y_k) = \tr(Y_i^2 Y_k Y_j)$, $\tr(Y_i^2 Y_j^2 Y_k) = \tr(Y_j^2 Y_i^2 Y_k)$,  and $h_1=h_2$ imply that $S_1$ generates the algebra $R_{+}^{O(3)}$.

\medskip
\noindent{\bf (b)} Since all elements of $S$ have pairwise different multidegrees, to show that $S$ is a minimal generating set it is enough to establish that all elements of $S_1$ are indecomposable in $R_{+}^{O(3)}$. Then by Theorem~\ref{theo-O3-symm2} we can only verify the following claims:
\begin{enumerate}
\item[(1)] $f_1=\tr(Y_1 Y_2 Y_3)$ is indecomposable,

\item[(2)] $f_2=\tr(Y_1^2 Y_2 Y_3)$  is indecomposable,

\item[(3)] $f_3=\tr(Y_1^2 Y_2^2 Y_3)$ is indecomposable,

\item[(4)] $f_4=\tr(Y_1^2 Y_2^2 Y_3^3)$ is indecomposable in case $p=3$.
\end{enumerate}

Assume that $f_1$ is decomposable, i.e., $f_1$ is a polynomial in invariants of lower degree. Then it is easy to see that
\begin{eq}\label{eq1}
\tr(A_1 A_2 A_3)=0
\end{eq}%
for all nilpotent  matrices $A_1, A_2, A_3\in S_{+}(3)$ with entries in  $\FF$. Moreover, equality~(\ref{eq1}) is valid for any extension of $\FF$. In particular, we can assume that $\FF$ is algebraically closed. 

We will use the following symmetric nilpotent matrices as tests:
$$R_1 = \symmmatr{0}{1}{0}{0}{\I}{0},\; R_2 = \symmmatr{1}{1}{0}{-3}{2\I \sqrt{2}}{2},\; R_3 = \symmmatr{0}{-1}{0}{0}{\I}{0}$$
$$T_1 = \symmmatr{1}{\I}{0}{-1}{0}{0},\; 
T_2 =  \symmmatr{1}{-\I}{0}{-1}{0}{0},\;
T_3 =  \symmmatr{1}{0}{\I}{0}{0}{-1},\; 
$$
where $\I^2=-1$. Note that $T_i^2=0$ for $i=1,2,3$. 

We have $\tr(T_1 T_2 T_3)=2\neq0$; a contradiction to equality~(\ref{eq1}).

Assume that $f_2$ is decomposable. Then it is easy to verify that there exists $\al\in\FF$ such that
\begin{eq}\label{eq2}
\tr(A_1^2 A_2 A_3)= \al \tr(A_1 A_2) \tr(A_1 A_3)
\end{eq}%
for all nilpotent  matrices $A_1, A_2, A_3\in S_{+}(3)$ with entries in  $\FF$, which we assume to be algebraically closed. For $\un{A}=(A_1,A_2,A_3)=(T_1,T_2,T_3)$ equality~(\ref{eq2}) implies $\al=0$. Hence for $\un{A}=(R_1,R_2,T_1)$ equality~(\ref{eq2}) imply $1+(1-2\sqrt{2})\I=0$. It is easy to see that the obtained equality does not hold in case $p\neq2$.

Assume that $f_3$ is decomposable. Then it is easy to verify that there exist $\al,\be,\ga\in\FF$ such that
\begin{eq}\label{eq3}
\tr(A_1^2 A_2^2 A_3)= \al \tr(A_1 A_2) \tr(A_1 A_2 A_3) + 
\be \tr(A_1 A_3) \tr(A_1 A_2^2) +
\ga \tr(A_2 A_3) \tr(A_1^2 A_2)
\end{eq}%
for all nilpotent matrices $A_1, A_2, A_3\in S_{+}(3)$ with entries in  $\FF$, which we assume to be algebraically closed. Considering  $\un{A}=(T_1, T_2,T_3)$ in equality~(\ref{eq3}) we obtain $\al=0$. For $\un{A}=(T_1, R_1, T_2)$  equality~(\ref{eq3}) implies $\be=0$. If $\un{A}=(R_1, T_1, T_2)$ in equality~(\ref{eq3}), then we have $\ga=0$. Finally, for $\un{A}=(R_1, R_2, T_1)$ equality~(\ref{eq3}) imply $2(\I-1)(\sqrt{2}-1)=0$; a contradiction. 

Assume that $f_4$ is decomposable. Then it is easy to verify that there exist $\al_i,\be_i,\ga,\de\in\FF$ ($i=1,2,3$) such that  $\tr(A_1^2 A_2^2 A_3^3)=$ 
$$\begin{array}{c}
\al_1  \tr(A_1^2 A_2 A_3) \tr(A_2 A_3) + \al_2  \tr(A_2^2 A_1 A_3) \tr(A_1 A_3) +
   \al_3  \tr(A_3^2 A_1 A_2) \tr(A_1 A_2) + \\
  \be_1 \tr(A_1^2 A_3) \tr(A_2^2 A_3) + \be_2 \tr(A_1^2 A_2) \tr(A_3^2 A_2) +
  \be_3 \tr(A_2^2 A_1) \tr(A_3^2 A_1) +\\   
  \ga \tr(A_1 A_2 A_3)^2  + \de \tr(A_1 A_2) \tr(A_1 A_3) \tr(A_2 A_3)\\
\end{array}$$%
for all nilpotent matrices $A_1, A_2, A_3\in S_{+}(3)$ with entries in  $\FF$, which we assume to be algebraically closed. Making the substitution $\un{A}=(T_1,T_2,T_3)$ in the above equality we obtain $\de=-\ga$. Then,  consequently considering substitutions $\un{A}=(R_1, T_1, T_2)$, $\un{A}=(T_1,R_1, T_2)$ and $\un{A}=(T_1, T_2, R_1)$ we obtain that $\al_1=\al_2=\al_3=2\ga$. Similarly, substitutions $\un{A}=(R_1,R_2,T_1)$, $\un{A}=(R_1,T_1,R_2)$ and $\un{A}=(T_1,R_1,R_2)$ imply that $\be_1=\be_2=\be_3=-\ga$. Finally, applying the substitution $\un{A}=(R_1,R_2,R_3)$ we get $6 \ga = -1$, which is a contradiction in case $p=3$. Therefore, $S_1$ is a minimal generating set for $R^{O(3)}_{+}$.
\end{proof}

\end{document}